\theoremstyle{plain}
\newtheorem{thm}{Theorem}
\newtheorem{lem}{Lemma}
\newtheorem{prop}{Proposition}
\newtheorem{cor}{Corollary}
\newtheorem{rem}{Remark}
\newcommand{\vecp}[1]{{\vec #1}\,'}
\providecommand{\ind}{\mathds{1}} 
\providecommand{\les}{\lesssim}
\providecommand{\N}{\mathbb{N}}
\providecommand{\R}{\mathbb{R}}
\providecommand{\Z}{\mathbb{Z}}
\providecommand{\ov}{\overline}
\DeclareMathOperator{\supp}{supp}
\begin{document}


\allowdisplaybreaks

\title{Real Interpolation for mixed Lorentz spaces and Minkowski's inequality}

\author{Rainer Mandel\textsuperscript{1}}
\address{\textsuperscript{1}Karlsruhe
Institute of Technology, Institute for Analysis, Englerstra{\ss}e 2, 76131 Karlsruhe, Germany}
\email{Rainer.Mandel@kit.edu}

\keywords{Real interpolation theory, mixed Lorentz spaces, Minkowski's inequality}
\subjclass[2020]{46A70,46B70,46E30} 

\begin{abstract}
  We prove embeddings and identities for real interpolation spaces between mixed Lorentz spaces.
  This partly relies on Minkowski's (reverse) integral inequality in Lorentz spaces
  $L^{p,r}(X)$  under optimal assumptions on the exponents $(p,r)\in
  (0,\infty)\times (0,\infty]$.
\end{abstract}

\maketitle
\allowdisplaybreaks

\section{Introduction}

  
  The characterization of real interpolation spaces between mixed Lorentz spaces is an unsolved problem.
  As we show in a forthcoming paper~\cite{Mandel2023}, such spaces show up in the analysis of the
  (Fourier) Restriction-Extension operator for the sphere $f\mapsto \mathcal F^{-1}(\hat f\,d\sigma)$ when the
  latter is considered as a linear map acting on $O(d-k)\times O(k)$-symmetric functions belonging to
  $L^p(\R^d)$ where $k\in\{2,\ldots,d-2\},d\geq 4$. Another application in the context of generalized
  Hardy-Littlewood inequalities was recently given by Chen and Sun~\cite{ChenSun2020}. In order to determine the optimal mapping properties of
  linear operators, it is often reasonable to make use of interpolation theory.
  Our aim is to provide new embeddings and identities for the real interpolation spaces between mixed Lorentz
  spaces that extend the results from~\cite{ChenSun2020}. For other interpolation methods see
  \cite{Milman1981} and, in the more general context of mixed norm spaces, Section~3.2 in
  \cite{WuYangYuan2022}. Our first main result shows that the characterization of these spaces
  is equivalent to the computation of real interpolation spaces for mixed Lebesgue spaces, which is a priori
  easier. In relevant special cases this even permits to characterize these interpolation spaces explicitly,
  see Corollary~\ref{cor:MixedInterpolation} below.
  For instance we shall obtain the identity
  \begin{align} \label{eq:example}
    \begin{aligned}
    (L^{p_0,r}(\R^{d-k};L^{p_0,r}(\R^k)), L^{p_1}(\R^d))_{\theta,q}
     = L^{p_\theta,q}(\R^d), \qquad\quad \frac{1}{p_\theta}= \frac{1-\theta}{p_0}+\frac{\theta}{p_1}
     \end{aligned} 
  \end{align}
  provided that $1<p_0\neq p_1<\infty$, $0<\theta<1$, $r,q\in [1,\infty]$. This is a nontrivial statement given that  
  the mixed Lorentz space $L^{p_0,r}(\R^{d-k};L^{p_0,r}(\R^k))$ does not coincide with 
  $L^{p_0,r}(\R^d)$ unless $r=p_0$, see~\cite[p.287]{Cwikel1974}. In what follows let $(X,\mu_X)$ be a 
  $\sigma-$finite measure space equipped with a suitable $\sigma$-algebra. In the context of our first main result we
  additionally assume this measure space to be  non-atomic or discrete, i.e., purely atomic with at
  most countably many atoms of equal measure. This will allow to use some results of
  Blozinski~\cite{Blozinski1981} later on. The standard examples are (weighted) Euclidean spaces or $\N$ or
  $\Z$ equipped with the counting measure.
  We write $g\in L^{p,r}(X)$ provided
  that $g:X\to \ov\R$ is measurable and the following quasi-norm is finite $$ \|g\|_{p,r}
    :=  \|g\|_{L^{p,r}(X)}
   := \| \rho\, \mu_X(\{|g|\geq \rho\})^{\frac{1}{p}}\|_{L^r(\R_+,\rho^{-1}\,d\rho)}.
  $$
  We recall that $\|\cdot\|_{p,r}$ is equivalent to a norm on
  $L^{p,r}(X)$  provided that
  \begin{equation} \label{eq:NormabilityCondition}
    1<p<\infty,1\leq r\leq \infty
    \quad\text{or}\quad p=r=1  \quad\text{or}\quad p=r=\infty.
  \end{equation} 
  In this case $L^{p,r}(X)$ is said to be normable. Let $(Y,\mu_Y)$ denote a second  measure space with
  analogous properties and define the product measure $\mu:=\mu_X\otimes \mu_Y$ on the corresponding product
  measurable space. Note that the $\sigma$-finiteness of $(X,\mu_X),(Y,\mu_Y)$  implies that $\mu$ 
  is well-defined. For $\vec p:=(p^1,p^2),\vec r:=(r^1,r^2)$ the mixed Lorentz space $L^{\vec p,\vec r} :=
  L^{p^1,r^1}(X;L^{p^2,r^2}(Y))$ consists of all
  functions $f:X\times Y\to\ov\R$ that are measurable and satisfy 
  $$
    \|f\|_{\vec p,\vec r} := \| F\|_{L^{p^1,r^1}(X)} <\infty
    \quad\text{where }F(x):= \|f(x,\cdot)\|_{L^{p^2,r^2}(Y)} \text{ for }x\in X.
  $$   
  Our first main result reads as follows.
   
\begin{thm}\label{thm:MixedInterpolationI}
  Let the measure spaces $(X,\mu_X),(Y,\mu_Y)$ be $\sigma$-finite and non-atomic or discrete. Moreover 
  assume that each of the tuples $(p_0^i,r_0^i),(p_1^i,r_1^i)$ satisfies $p_0^i\neq p_1^i$ and \eqref{eq:NormabilityCondition} for $i=1,2$.
  Then we have for $0<\theta_0\neq\theta_1<1,0<\vartheta<1$ and all $q\in [1,\infty]$ 
   \begin{align} \label{eq:inclusion}
     (L^{\vec p_0,\vec r_0},L^{\vec p_1,\vec r_1})_{(1-\vartheta)\theta_0+\vartheta\theta_1,q} 
    = (L^{\vec p_{\theta_0}}, L^{\vec p_{\theta_1}})_{\vartheta,q}.
   \end{align}
   where $\frac{1}{p_{\theta_j}^i} = \frac{1-\theta_j}{p_0^i}+\frac{\theta_j}{p_1^i}$ for $i=1,2$ and $j=0,1$. 
\end{thm}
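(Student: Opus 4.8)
The plan is to derive \eqref{eq:inclusion} from the reiteration theorem of the real method. Observe first that $p_0^i\neq p_1^i$ together with \eqref{eq:NormabilityCondition} forces $p_\theta^i\in(1,\infty)$ for every $\theta\in(0,1)$ and $i=1,2$, so $L^{\vec p_\theta}=L^{p_\theta^1}(X;L^{p_\theta^2}(Y))$ is a genuine (reflexive) mixed Lebesgue space. The heart of the matter is the two-sided continuous embedding
\begin{equation}\label{eq:classCtheta}
 (L^{\vec p_0,\vec r_0},L^{\vec p_1,\vec r_1})_{\theta,1}\ \hookrightarrow\ L^{\vec p_\theta}\ \hookrightarrow\ (L^{\vec p_0,\vec r_0},L^{\vec p_1,\vec r_1})_{\theta,\infty}\qquad(0<\theta<1),
\end{equation}
i.e.\ that $L^{\vec p_\theta}$ is of class $\mathcal C_\theta$ with respect to the mixed Lorentz couple. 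Granting \eqref{eq:classCtheta} for $\theta=\theta_0$ and $\theta=\theta_1$ and using $\theta_0\neq\theta_1$, the reiteration theorem in its version for intermediate spaces of class $\mathcal C_\sigma$, applied with $\bar A:=(L^{\vec p_0,\vec r_0},L^{\vec p_1,\vec r_1})$ and $X_j:=L^{\vec p_{\theta_j}}$, yields
\[
 (L^{\vec p_{\theta_0}},L^{\vec p_{\theta_1}})_{\vartheta,q}=\bar A_{(1-\vartheta)\theta_0+\vartheta\theta_1,\,q}=(L^{\vec p_0,\vec r_0},L^{\vec p_1,\vec r_1})_{(1-\vartheta)\theta_0+\vartheta\theta_1,\,q}
\]
for all $0<\vartheta<1$, $q\in[1,\infty]$, which is exactly \eqref{eq:inclusion}. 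Thus everything reduces to \eqref{eq:classCtheta}; since there is no Holmstedt-type formula for the $K$-functional of a mixed Lorentz couple, this has to be obtained by direct estimates.

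For the second embedding in \eqref{eq:classCtheta} I would bound $t^{-\theta}K(t,f;\bar A)\lesssim\|f\|_{L^{\vec p_\theta}}$ uniformly in $t>0$ by exhibiting, for each $t$, a splitting $f=f_0+f_1$ with $\|f_0\|_{\vec p_0,\vec r_0}+t\|f_1\|_{\vec p_1,\vec r_1}\lesssim t^\theta\|f\|_{L^{\vec p_\theta}}$. After relabelling the endpoints one may assume $p_0^1<p_1^1$, and the natural candidate is an amplitude truncation $f_0=f\ind_{\{|f|>\lambda(t)\}}$, $f_1=f-f_0$, with $\lambda(t)$ balancing the two norms. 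The real work is to estimate the \emph{mixed} quasinorms of the truncations, which I would carry out by a double-dyadic decomposition: writing $f=\sum_k f\ind_{\{2^k<|f|\le 2^{k+1}\}}$ reduces matters to characteristic functions, for which $\|\ind_E\|_{L^{p^1,r^1}(X;L^{p^2,r^2}(Y))}\simeq\bigl\|x\mapsto\mu_Y(\{y:(x,y)\in E\})^{1/p^2}\bigr\|_{L^{p^1,r^1}(X)}$, and a further dyadic decomposition of the function $x\mapsto\mu_Y(E^x)$ reduces matters to honest indicators on $X$, whose $L^{p^1,r^1}(X)$-quasinorm is the pure power $\simeq\mu_X(\cdot)^{1/p^1}$; on that level the required power-law interpolation is immediate. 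The elementary pieces are then reassembled by interchanging the outer quasinorm $\|\cdot\|_{L^{p^1,r^1}(X)}$ with the two summations, i.e.\ by Minkowski's inequality in $L^{p^1,r^1}(X)$ and by its reverse form — both available for admissible exponents \eqref{eq:NormabilityCondition} — the non-atomic/discrete hypothesis on $(X,\mu_X)$ entering here through the results of Blozinski~\cite{Blozinski1981}. When the outer and inner exponents run in opposite directions, say $p_0^1<p_1^1$ but $p_0^2>p_1^2$, the scalar truncation must be replaced by one that also cuts each slice $f(x,\cdot)$; one more application of (reverse) Minkowski, commuting the inner and outer quasinorms on the affected pieces, reduces this to the aligned case.

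For the first embedding in \eqref{eq:classCtheta} I would argue analogously, now bounding $K(t,f;\bar A)$ from below so that $\int_0^\infty\bigl(t^{-\theta}K(t,f;\bar A)\bigr)\,\tfrac{dt}{t}\simeq\|f\|_{\bar A_{\theta,1}}$ controls $\|f\|_{L^{\vec p_\theta}}$: taking near-optimal decompositions at the dyadic levels $t=2^k$, telescoping into pieces of controlled $A_0$-, $A_1$- and amplitude size, and summing up in the Lebesgue space $L^{\vec p_\theta}$ by the same reduction to indicators and Minkowski's inequality (with $r=p$, so \eqref{eq:NormabilityCondition} is trivially met). I note that the obvious shortcut — a log-convexity estimate $\|f\|_{L^{\vec p_\theta}}\lesssim\|f\|_{\vec p_0,\vec r_0}^{1-\theta}\|f\|_{\vec p_1,\vec r_1}^\theta$ — does hold for \emph{scalar} Lorentz spaces but \emph{fails} for mixed Lorentz spaces once the outer secondary indices are large, because then the two inner slice-norms $x\mapsto\|f(x,\cdot)\|_{L^{p_0^2,r_0^2}(Y)}$ and $x\mapsto\|f(x,\cdot)\|_{L^{p_1^2,r_1^2}(Y)}$ need not be comparably distributed; this is precisely why the decomposition-plus-Minkowski route, which only uses the power law on elementary pieces, seems unavoidable.

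The step I expect to be the main obstacle is exactly the two-sided estimate of the mixed Lorentz $K$-functional underlying \eqref{eq:classCtheta} — in particular the reassembly of the double-dyadic pieces uniformly in the truncation threshold $\lambda(t)$ and the treatment of the opposite-monotonicity case — and this is what forces one to establish Minkowski's integral inequality, and its reverse, in the Lorentz spaces $L^{p,r}$ for the full admissible range of $(p,r)$ rather than to get away with a soft reiteration argument.
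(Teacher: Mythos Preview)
Your overall framework --- show that $L^{\vec p_\theta}$ is of class $\mathcal C_\theta$ for the couple $(L^{\vec p_0,\vec r_0},L^{\vec p_1,\vec r_1})$ and then apply reiteration --- is sound and is essentially what the paper does too. But the paper's execution is very different from yours, and in particular the ``shortcut'' you dismiss is exactly what the paper uses. The log-convexity estimate $\|f\|_{L^{\vec p_\theta}}\lesssim \|f\|_{L^{\vec p_0,\vec\infty}}^{1-\theta}\|f\|_{L^{\vec p_1,\vec\infty}}^{\theta}$ \emph{does} hold for mixed Lorentz spaces; the paper quotes it as a special case of Chen--Sun~\cite[Theorem~2.22]{ChenSun2020}, and it immediately yields $(L^{\vec p_0,\vec r_0},L^{\vec p_1,\vec r_1})_{\theta,1}\hookrightarrow L^{\vec p_\theta}$ for all $\vec r_0,\vec r_1$ by monotonicity in the second exponents. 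So your first embedding in~\eqref{eq:classCtheta} comes for free, and your argument that it fails ``once the outer secondary indices are large'' is mistaken. The reverse inclusion is obtained in the paper not by a direct $K$-functional bound but by duality: one computes $(L^{\vec p,\vec r})'=L^{\vecp p,\vecp r}$ via Blozinski~\cite[Theorem~3.2.II]{Blozinski1981} (this is where the non-atomic/discrete hypothesis actually enters), applies the duality formula for real interpolation spaces, and feeds in the already-proved inclusion for the dual exponents. No Minkowski inequality in Lorentz spaces is used anywhere in the proof of Theorem~\ref{thm:MixedInterpolationI}; that machinery is developed in the paper solely for Theorem~\ref{thm:MixedInterpolationII}.

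As for the route you do propose --- direct two-sided $K$-functional estimates via amplitude truncation and a double-dyadic reduction to indicators --- the sketch is too vague at the decisive points to be convincing. Reassembling the elementary pieces requires summing a doubly indexed family inside an outer $L^{p^1,r^1}(X)$-quasinorm with control \emph{uniform in the truncation threshold}; the forward Minkowski inequality in $L^{p^1,r^1}$ (which does hold under~\eqref{eq:NormabilityCondition}) only gives you an upper bound term by term, and nothing prevents the resulting double sum from diverging unless you exhibit geometric decay in both indices, which you have not argued. The ``opposite-monotonicity'' case is worse: your suggestion to cut each slice and then commute inner and outer quasinorms by reverse Minkowski would require $p^1\leq 1$, which is excluded by~\eqref{eq:NormabilityCondition}. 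And your attribution of Blozinski's results to the Minkowski step is off --- Blozinski is about duals of vector-valued Banach function spaces, not about integral inequalities. In short: the reiteration skeleton is right, but you have talked yourself out of the easy half (Chen--Sun log-convexity) and into a hard half (direct $K$-bounds) whose details you have not supplied; the paper instead closes the loop by a short duality argument.
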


 The relation~\eqref{eq:inclusion} is particularly interesting when $L^{\vec p_{\theta_0}},L^{\vec
 p_{\theta_1}}$ coincide with ordinary Lebesgue spaces over $X\times Y$. By the Tonelli-Fubini
 Theorem this is the case if $p_0^1=p_0^2$ and $p_1^1=p_1^2$. Then the interpolation space on the right is in
 fact an ordinary Lorentz space over $X\times Y$, written $L^{p_\theta,q}$. This observation can be stated as
 follows.

\begin{cor} \label{cor:MixedInterpolation}
   In addition to the hypotheses of Theorem~\ref{thm:MixedInterpolationI} assume $p_0^1=p_0^2,p_1^1=p_1^2$.
   Then we have for all $\theta\in (0,1)$ and $q\in [1,\infty]$ 
   $$
      (L^{\vec p_0,\vec r_0},L^{\vec p_1,\vec r_1})_{\theta,q} = L^{p_\theta,q}   
   $$
   where $\frac{1}{p_\theta} = \frac{1-\theta}{p_0^i}+\frac{\theta}{p_1^i}$ for $i=1,2$. 
   In particular, \eqref{eq:example} holds.
\end{cor}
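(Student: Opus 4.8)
The plan is to obtain the identity as an immediate consequence of Theorem~\ref{thm:MixedInterpolationI}: the genuine analytic content is already contained in that theorem, so what remains is to choose its auxiliary parameters well, to identify the resulting mixed Lebesgue spaces with ordinary Lebesgue spaces via Tonelli--Fubini, and to invoke the classical real interpolation theorem for $L^p$-spaces. None of these steps should present a real difficulty.

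First, given $\theta\in(0,1)$ and $q\in[1,\infty]$, I would pick auxiliary numbers $0<\theta_0<\theta<\theta_1<1$ and let $\vartheta\in(0,1)$ be the unique number with $(1-\vartheta)\theta_0+\vartheta\theta_1=\theta$. Since each of the tuples $(p_0^i,r_0^i),(p_1^i,r_1^i)$ satisfies $p_0^i\neq p_1^i$ and \eqref{eq:NormabilityCondition}, Theorem~\ref{thm:MixedInterpolationI} applies with this choice and yields
\[
  (L^{\vec p_0,\vec r_0},L^{\vec p_1,\vec r_1})_{\theta,q}=(L^{\vec p_{\theta_0}},L^{\vec p_{\theta_1}})_{\vartheta,q},
  \qquad \frac{1}{p_{\theta_j}^i}=\frac{1-\theta_j}{p_0^i}+\frac{\theta_j}{p_1^i}\quad(i=1,2,\ j=0,1).
\]

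Next I would identify the mixed Lebesgue spaces on the right. Under the extra hypothesis $p_0^1=p_0^2$, $p_1^1=p_1^2$ one has $p_{\theta_j}^1=p_{\theta_j}^2=:s_j$, and since $p_0^i\neq p_1^i$ the map $\theta\mapsto 1/p_\theta^i$ is affine with nonzero slope, so $s_0\neq s_1$; a short computation with $p_0^i,p_1^i\in[1,\infty]$ also shows $s_0,s_1\in(1,\infty)$. By the Tonelli--Fubini theorem $L^{\vec p_{\theta_j}}=L^{s_j}(X;L^{s_j}(Y))=L^{s_j}(X\times Y,\mu)$, hence the right-hand side above equals $(L^{s_0}(X\times Y),L^{s_1}(X\times Y))_{\vartheta,q}$, which by the classical real interpolation theorem for Lebesgue spaces is $L^{p_\theta,q}(X\times Y)$ with $\tfrac1{p_\theta}=\tfrac{1-\vartheta}{s_0}+\tfrac{\vartheta}{s_1}$. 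Collecting the coefficients of $1/p_0^i$ and $1/p_1^i$ in this last expression gives $\tfrac1{p_\theta}=\tfrac{1-\theta}{p_0^i}+\tfrac{\theta}{p_1^i}$, which is the asserted exponent; in particular it is independent of the auxiliary choice of $\theta_0,\theta_1$, so the displayed identity follows.

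Finally, for \eqref{eq:example} I would specialize to $X=\R^{d-k}$, $Y=\R^k$ with Lebesgue measure (non-atomic and $\sigma$-finite), $\vec p_0=(p_0,p_0)$, $\vec r_0=(r,r)$ and $\vec p_1=\vec r_1=(p_1,p_1)$, so that $L^{\vec p_1,\vec r_1}=L^{p_1}(\R^{d-k};L^{p_1}(\R^k))=L^{p_1}(\R^d)$ by Fubini; the assumptions $1<p_0\neq p_1<\infty$, $r,q\in[1,\infty]$ guarantee that all four tuples satisfy \eqref{eq:NormabilityCondition}, and the identity just proved becomes precisely \eqref{eq:example}. The only points that require some care are checking that the triple $(\theta_0,\theta_1,\vartheta)$ is admissible for Theorem~\ref{thm:MixedInterpolationI}, the Tonelli--Fubini identification of the mixed Lebesgue spaces with genuine Lebesgue spaces over $X\times Y$, and the bookkeeping of exponents; I do not expect any of these to be a genuine obstacle.
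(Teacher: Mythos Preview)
Your proposal is correct and follows essentially the same route as the paper's proof: apply Theorem~\ref{thm:MixedInterpolationI}, identify $L^{\vec p_{\theta_j}}$ with $L^{p_{\theta_j}}(X\times Y)$ via Tonelli--Fubini, and conclude by the classical identity $(L^{p_{\theta_0}},L^{p_{\theta_1}})_{\vartheta,q}=L^{p_\theta,q}$. You are merely more explicit than the paper about the admissible choice of $\theta_0,\theta_1,\vartheta$, the fact that $s_0\neq s_1\in(1,\infty)$, and the specialization that yields~\eqref{eq:example}.
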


 The case  $(p_0^i,r_0^i)=(p_1^i,r_1^i)$ for one $i\in\{1,2\}$ is not covered by
 Theorem~\ref{thm:MixedInterpolationI}, so what can be said? For $i=2$, when the ``second Lorentz spaces''
 over $Y$ coincide, there is a complete and simple answer that extends the observation by
 Cwikel~\cite[p.286]{Cwikel1974} about this topic: Corollary~4.5 in \cite{ChenSun2022} shows $(B_0(A),B_1(A))_{\theta,q} =
 (B_0,B_1)_{\theta,q}(A)$ under very mild assumptions on the couple $(B_0,B_1)$ and $A$. In
 particular, this shows that real interpolation pairs of mixed Lorentz spaces can also be mixed Lorentz spaces.
 The case $i=1$ is much more challenging. 
 To treat this case in a more abstract setting we shall be interested in Lorentz-space versions of the
 embeddings  
   \begin{align}
     (\mathcal L^p(X;A_0),\mathcal L^p(X;A_1))_{\theta,q} &\subset \mathcal L^p(X;(A_0,A_1)_{\theta,q})
     &&\text{if }1\leq q\leq p\leq \infty,
      \label{eq:CwikelInclusionI}   \\
     (\mathcal L^p(X;A_0),\mathcal L^p(X;A_1))_{\theta,q} &\supset \mathcal L^p(X;(A_0,A_1)_{\theta,q})
     &&\text{if } 1\leq p\leq q\leq \infty,
     \label{eq:CwikelInclusionII}
   \end{align}
   which are due to Cwikel~\cite[p.288]{Cwikel1974}. Here, $(A_0,A_1)$ denotes a compatible couple
   in the sense of \cite[Section~2.3]{BerghLoefstroem1976} and $\mathcal L^p(X;A_0)$ stands for a Bochner
   space that slightly differs from $L^p(X;A)$ due to some subtle measurability issues, see Section~3 in
   \cite{ChenSun2022}.
  We recall that the real interpolation space $(A_0,A_1)_{\theta,q}$ comes with the quasi-norm
  \begin{equation}\label{eq:DefNormInterpolationspace}
    \|g\|_{(A_0,A_1)_{\theta,q}}
    = \left(\int_0^\infty s^{-\theta q-1} K(s,g,A_0,A_1)^q \,ds \right)^{\frac{1}{q}}
  \end{equation}
  where $0<\theta<1,0<q\leq \infty$ and Peetre's $K$-functional is given by
  \begin{equation*}
    K(s,g,A_0,A_1) = \inf_{g_0+g_1=g} \|g_0\|_{A_0}+s\|g_1\|_{A_1}
  \end{equation*}
  In~\cite{Cwikel1974} Cwikel states that the embeddings
  \eqref{eq:CwikelInclusionI},\eqref{eq:CwikelInclusionII} result from a straightforward application of
  Minkowski's inequality in integral form respectively its reverse version. It is therefore not
  surprising that a generalization of these embeddings to Lorentz spaces requires for a 
  complete understanding of Minkowski's inequality in this setting, which we shall provide in
  Section~\ref{sec:Minkowski} and~\ref{sec:MinkowskiCounter}. In this way we
  obtain the following generalization of \eqref{eq:CwikelInclusionI},\eqref{eq:CwikelInclusionII}:

%
%

\begin{thm} \label{thm:MixedInterpolationII}  
  Let  $(X,\mu_X)$ be a  $\sigma$-finite measure space and $(A_0,A_1)$ a compatible couple of Banach spaces. 
  Then the following holds:
  \begin{itemize}
    \item[(i)] If $0<q<p<\infty, q\leq r\leq \infty$ or $0<p=q=r\leq \infty$
    then
  $$
    (L^{p,r}(X;A_0),L^{p,r}(X;A_1))_{\theta,q}
    \subset L^{p,r}(X;(A_0,A_1)_{\theta,q}).
  $$
    \item[(ii)]  If $0<p< q\leq \infty,0<r\leq q$ or $0<p=q=r\leq \infty$
    then
  $$
    (L^{p,r}(X;A_0),L^{p,r}(X;A_1))_{\theta,q}
    \supset L^{p,r}(X;(A_0,A_1)_{\theta,q}).
  $$
  \end{itemize}
\end{thm}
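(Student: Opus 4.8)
The plan is to reduce the two embeddings in Theorem~\ref{thm:MixedInterpolationII} to the corresponding Minkowski-type inequalities in Lorentz spaces proved in Section~\ref{sec:Minkowski}, mimicking Cwikel's argument for the Lebesgue-space cases \eqref{eq:CwikelInclusionI}, \eqref{eq:CwikelInclusionII}. Recall that for $g\in L^{p,r}(X;A_0)+L^{p,r}(X;A_1)$ the $K$-functional for the Bochner couple can, up to equivalence, be computed pointwise: writing $g(x)\in A_0+A_1$, one has $K(s,g(x),A_0,A_1)$ as an $A$-valued quantity whose $L^{p,r}(X)$-norm controls (and is controlled by) $K(s,g,L^{p,r}(X;A_0),L^{p,r}(X;A_1))$. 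More precisely, the inequality $K(s,g,L^{p,r}(X;A_0),L^{p,r}(X;A_1)) \le \|x\mapsto K(s,g(x),A_0,A_1)\|_{L^{p,r}(X)}$ always holds by choosing a near-optimal measurable decomposition $g=g_0+g_1$ pointwise (a measurable selection argument as in Cwikel or in Section~3 of \cite{ChenSun2022}, which is precisely why the space $\mathcal L^p$ rather than $L^p$ is needed in \eqref{eq:CwikelInclusionI}–\eqref{eq:CwikelInclusionII}; here the statement is phrased so that this subtlety is absorbed), and the reverse inequality $K(s,g,L^{p,r}(X;A_0),L^{p,r}(X;A_1)) \ge$ the norm of the pointwise $K$-functional holds trivially since any global decomposition restricts to a pointwise one.

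First I would treat part (i). Starting from the definition \eqref{eq:DefNormInterpolationspace} and the upper bound on the $K$-functional, I get
\begin{align*}
  \|g\|_{(L^{p,r}(X;A_0),L^{p,r}(X;A_1))_{\theta,q}}
  &\le \left(\int_0^\infty s^{-\theta q-1}\, \|x\mapsto K(s,g(x),A_0,A_1)\|_{L^{p,r}(X)}^q\,ds\right)^{1/q}.
\end{align*}
The inner object is the $L^q(\R_+,s^{-\theta q-1}\,ds)$-norm of the function $s\mapsto \|h_s\|_{L^{p,r}(X)}$ where $h_s(x):=K(s,g(x),A_0,A_1)$. Under the hypothesis $0<q<p<\infty$, $q\le r\le\infty$ (or $p=q=r$), the Minkowski-type inequality from Section~\ref{sec:Minkowski} — applied with the inner measure space $(\R_+,s^{-\theta q-1}\,ds)$ carrying the $L^q$-norm and the outer space $L^{p,r}(X)$ — lets me pull the $L^{p,r}(X)$-norm outside the $L^q$-integral, yielding the bound $\big\|\,x\mapsto \|s\mapsto K(s,g(x),A_0,A_1)\|_{L^q(\R_+,s^{-\theta q-1}ds)}\,\big\|_{L^{p,r}(X)} = \|g\|_{L^{p,r}(X;(A_0,A_1)_{\theta,q})}$, which is exactly the claimed embedding. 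For part (ii), the argument is symmetric: I start from the lower bound on the $K$-functional (the trivial direction), plug into \eqref{eq:DefNormInterpolationspace}, and apply the \emph{reverse} Minkowski inequality of Section~\ref{sec:Minkowski}, valid under $0<p<q\le\infty$, $0<r\le q$ (or $p=q=r$), to interchange the $L^q$- and $L^{p,r}(X)$-norms in the opposite order, obtaining $\|g\|_{L^{p,r}(X;(A_0,A_1)_{\theta,q})}\le \|g\|_{(L^{p,r}(X;A_0),L^{p,r}(X;A_1))_{\theta,q}}$.

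The main obstacle is bookkeeping the measurability and the precise statement of the Lorentz-space Minkowski inequality so that the exponent ranges match exactly the hypotheses in (i) and (ii). Concretely, I need the version of Minkowski's inequality that reads $\big\|\,\|f(\cdot,y)\|_{L^q(Y)}\,\big\|_{L^{p,r}(X)} \le \big\|\,\|f(x,\cdot)\|_{L^{p,r}(X)}\,\big\|_{L^q(Y)}$ when $q\le p$ and $q\le r$, together with its reverse when $p\le q$ and $r\le q$; these should be the main theorems of Section~\ref{sec:Minkowski}, and I must make sure the degenerate line $p=q=r$ (where Minkowski is an equality, being Fubini) is included to cover the endpoint cases. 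A secondary point is to justify the pointwise $K$-functional identity/equivalence in the $A$-valued setting — that $K(s,g(x))$ is a measurable function of $x$ and that a jointly measurable near-minimizing decomposition exists — but this is standard and, given the way the theorem is phrased with the Banach spaces $A_0,A_1$ and with the Bochner spaces understood as in \cite{ChenSun2022}, can be cited rather than redone. Once these two ingredients are in place, both inclusions follow in a few lines as sketched above.
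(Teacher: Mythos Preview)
Your overall strategy---reduce to the pointwise $K$-functional via the equivalence
\[
  \|x\mapsto K(s,g(x),A_0,A_1)\|_{L^{p,r}(X)} \simeq K(s,g,L^{p,r}(X;A_0),L^{p,r}(X;A_1))
\]
(this is \eqref{eq:KfunctionalRule} in the paper) and then swap the $L^q_s$- and $L^{p,r}_x$-norms by the Lorentz--Minkowski inequalities of Section~\ref{sec:Minkowski}---is exactly the paper's approach. The paper raises everything to the $q$-th power and applies Lemma~\ref{lem:MinkowskiI} (resp.\ Lemma~\ref{lem:MinkowskiII}) in $L^{p/q,r/q}(X)$, which is the same inequality you write in the last paragraph in the form $\big\|\,\|f\|_{L^q(Y)}\big\|_{L^{p,r}(X)} \le \big\|\,\|f\|_{L^{p,r}(X)}\big\|_{L^q(Y)}$ for $q\le p,\,q\le r$.

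However, in your execution the inequality directions do not close. In part~(i) you start from the \emph{upper} bound $K_{\rm global}\le \|K_{\rm ptw}\|_{L^{p,r}(X)}$, obtaining
\[
  \|g\|_{(L^{p,r}(X;A_0),L^{p,r}(X;A_1))_{\theta,q}} \;\le\; \Big(\int_0^\infty \|K(s,g(\cdot))\|_{L^{p,r}(X)}^q\,d\tau(s)\Big)^{1/q}=:M.
\]
But the forward Minkowski inequality (for $q\le p,\,q\le r$, as you yourself state it) gives $\|g\|_{L^{p,r}(X;(A_0,A_1)_{\theta,q})}\le M$, not $M\le \|g\|_{L^{p,r}(X;(A_0,A_1)_{\theta,q})}$. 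So both norms sit below $M$ and nothing follows; moreover, the conclusion you write is $\|g\|_{\text{interp}}\le \|g\|_{L^{p,r}(X;(A_0,A_1)_{\theta,q})}$, which is the inclusion in (ii), not (i). The same swap occurs in your (ii): using the ``trivial'' direction $K_{\rm global}\ge \|K_{\rm ptw}\|_{L^{p,r}(X)}$ together with reverse Minkowski again leaves both norms on the same side of $M$.

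The fix is simply to use the other half of the $K$-functional equivalence in each case (or, as the paper does, the full $\simeq$): for (i) one needs $K_{\rm global}\gtrsim \|K_{\rm ptw}\|_{L^{p,r}(X)}$ so that $M\simeq \|g\|_{\text{interp}}$, and then forward Minkowski gives $\|g\|_{L^{p,r}(X;(A_0,A_1)_{\theta,q})}\le M\simeq \|g\|_{\text{interp}}$, which is (i). Symmetrically for (ii). Once this is corrected, your argument is the paper's argument.
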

  
  Given the optimality of the exponent range for Minkowski's (reverse) inequality that we give below and its 
  direct application in the proof of Theorem~\ref{thm:MixedInterpolationII}, we expect that the embeddings
  (i),(ii) are optimal without further assumptions on $X,A_0,A_1$. In the following we write  $a\les
  b$ instead of $a\leq C b$ for some $C>0$ only depending on fixed parameters. Similarly for $\gtrsim$ and
  $a\simeq b$ meaning $a\les b$ and $b\les a$.

\section{Proof of Theorem~\ref{thm:MixedInterpolationI} and Corollary~\ref{cor:MixedInterpolation}}

  
  \begin{prop}\label{prop:dual}
    Let the measure spaces $(X,\mu_X),(Y,\mu_Y)$ be $\sigma$-finite and non-atomic or discrete. Moreover 
    assume $p^i=r^i=1$ or $1<p^i<\infty,1\leq r^i<\infty$ for each $i\in\{1,2\}$.
    Then $(L^{\vec p,\vec r})' = L^{\vecp p,\vecp r}$ 
    where $\vecp p,\vecp r$ denotes the corresponding tuple of H\"older conjugates. 
  \end{prop}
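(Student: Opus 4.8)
The plan is to identify the dual of the mixed Lorentz space $L^{\vec p,\vec r}=L^{p^1,r^1}(X;L^{p^2,r^2}(Y))$ with $L^{\vecp p,\vecp r}=L^{(p^1)',(r^1)'}(X;L^{(p^2)',(r^2)'}(Y))$ by first establishing two ingredients: a single-variable duality theorem for normable Lorentz spaces and an abstract vector-valued duality statement $L^{p,r}(X;A)' = L^{(p)',(r)'}(X;A')$ for the relevant range of exponents, and then composing the two. First I would recall the classical fact (due to Hunt, and in the required generality due to Cwikel and to Blozinski) that for a $\sigma$-finite non-atomic or discrete measure space and exponents in the range $p=r=1$ or $1<p<\infty,\,1\leq r<\infty$ one has $L^{p,r}(X)' = L^{p',r'}(X)$ isometrically up to equivalence of norms; the hypothesis \eqref{eq:NormabilityCondition} is exactly what makes $L^{p,r}$ a (normable) Banach space, and excluding $r=\infty$ is what makes it separable so that the dual is again a Lorentz space rather than something larger. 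The pairing is the usual integral pairing $\langle f,g\rangle = \int_X fg\,d\mu_X$, and boundedness of this pairing is a consequence of O'Neil's version of Hölder's inequality in Lorentz spaces.

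Next I would prove the vector-valued version. For the inclusion $L^{(p)',(r)'}(X;A') \hookrightarrow L^{p,r}(X;A)'$ one simply pairs $\langle f,g\rangle = \int_X \langle f(x),g(x)\rangle_{A,A'}\,d\mu_X(x)$ and estimates $|\langle f(x),g(x)\rangle| \leq \|f(x)\|_A\|g(x)\|_{A'}$ pointwise, then applies scalar Hölder in $L^{p,r}(X)$ to the functions $x\mapsto\|f(x)\|_A$ and $x\mapsto\|g(x)\|_{A'}$; this gives a bounded functional. For the reverse inclusion, i.e. that every bounded functional on $L^{p,r}(X;A)$ arises this way, one uses separability of $L^{p,r}(X;A)$ (which follows from $r<\infty$ together with separability of $A$ in our applications, or is handled via the lifting/localization arguments of Blozinski) and a Radon–Nikodym / measurable-selection argument: restrict a functional $\Phi$ to simple functions, produce for each measurable piece an element of $A'$, and glue these into a measurable $A'$-valued function $g$; then verify $g\in L^{(p)',(r)'}(X;A')$ by testing against functions of the form $\mathbf{1}_E(x)\,a$ and $h(x)\,a$ and invoking scalar Lorentz-space duality to recover the norm. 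This is where I would lean most heavily on Blozinski~\cite{Blozinski1981}, whose results on Lorentz spaces of vector-valued functions over non-atomic or discrete measure spaces are tailored precisely to this gluing step.

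Finally, I would assemble the two pieces by applying the vector-valued statement with $A = L^{p^2,r^2}(Y)$ and then the scalar statement (applied on $Y$) to identify $A' = L^{p^2,r^2}(Y)' = L^{(p^2)',(r^2)'}(Y)$; this yields
\[
  (L^{\vec p,\vec r})' = L^{(p^1)',(r^1)'}\bigl(X; L^{p^2,r^2}(Y)'\bigr) = L^{(p^1)',(r^1)'}\bigl(X; L^{(p^2)',(r^2)'}(Y)\bigr) = L^{\vecp p,\vecp r},
\]
where at each stage the hypothesis on $(p^i,r^i)$ is exactly \eqref{eq:NormabilityCondition} with $r^i<\infty$, guaranteeing normability and separability. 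I expect the main obstacle to be the reverse inclusion in the vector-valued step: ensuring that a bounded functional really is represented by an $A'$-valued function that is both weakly measurable and has the correct Lorentz integrability, rather than merely a finitely additive vector measure. This is precisely the subtlety that forces the non-atomic-or-discrete hypothesis and the appeal to Blozinski's framework, and the argument must be arranged so that the measurable selection is compatible with the rearrangement-based definition of the $L^{p,r}$ quasi-norm; the separability provided by $r^i<\infty$ is what ultimately makes this go through.
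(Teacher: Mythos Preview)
Your proposal is correct and follows essentially the same route as the paper: invoke the scalar duality $(L^{p^i,r^i})' = L^{(p^i)',(r^i)'}$ and then the vector-valued duality $(L^{p^1,r^1}(X;A))' = L^{(p^1)',(r^1)'}(X;A')$ from Blozinski~\cite{Blozinski1981}, applied with $A=L^{p^2,r^2}(Y)$. The only difference is cosmetic: the paper cites Blozinski's Theorem~3.2.II as a black box and phrases the enabling hypothesis as \emph{absolute continuity of the norm} (via \cite{PickKufner2013}) rather than separability, which is the condition actually used in the Banach-function-space framework to make the associate space coincide with the dual.
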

  \begin{proof}
    The conditions ensure that the Lorentz spaces $L^{p^1,r^1}(X)$ and $L^{p^2,r^2}(Y)$ have absolutely
    continuous norm \cite[Theorem~8.5.1]{PickKufner2013} and so we have the characterization of the dual
    spaces given by $(L^{p^1,r^1}(X))' = L^{(p^1)',(r^1)'}(X), (L^{p^2,r^2}(Y))' = L^{(p^2)',(r^2)'}(Y)$, see
    \cite[Corollary~8.5.2]{PickKufner2013}.
    Given that $X$ is non-atomic or discrete, Theorem~3.2.II in \cite{Blozinski1981} implies 
    $$
      (L^{\vec p,\vec r})' 
      = (L^{p^1,r^1}(X))'(L^{p^2,r^2}(Y))'
      =  L^{(p^1)',(r^1)'}(X)(L^{(p^2)',(r^2)'}(Y))
      = L^{\vecp p,\vecp r}.
    $$    
    Note that the second and third expression are written in the notation from \cite{Blozinski1981}
  \end{proof}
  
 With this at hand  we can prove Theorem~\ref{thm:MixedInterpolationI}. We shall occasionally write
 $\vec 1:=(1,1)$ and  $\vec \infty:=(\infty,\infty)$.
 Let the tuples $(p_0^i,r_0^i),(p_1^i,r_1^i)$ for $i=1,2$ be given as in the theorem. Then a special case
 of the interpolation inequality from \cite[Theorem~2.22]{ChenSun2020} gives
  \begin{align} \label{eq:ChenSunInequality}
    \|f\|_{L^{\vec p_\theta}}
    \les \|f\|_{L^{\vec p_0,\vec \infty}}^{1-\theta} \|f\|_{L^{\vec p_1,\vec \infty}}^{\theta}
    \qquad\text{for all }\theta\in (0,1)
  \end{align} 
  where $\frac{1}{p_\theta^i}=\frac{1-\theta}{p_0^i}+\frac{\theta}{p_1^i}$ for $i=1,2$.
  So \cite[Theorem~3.5.2]{BerghLoefstroem1976} implies
  \begin{equation*} 
    L^{\vec p_\theta}  \supset (L^{\vec p_0,\vec \infty},L^{\vec p_1,\vec \infty})_{\theta,1}
    \qquad\text{for all }\theta\in (0,1).
  \end{equation*} 
  We fix $0<\theta_0\neq \theta_1<1$ and interpolate the corresponding embeddings  with each
  other. This gives, for $0<\vartheta<1, 1\leq q\leq \infty$,
  \begin{align*} 
    (L^{\vec p_{\theta_0}}, L^{\vec p_{\theta_1}})_{\vartheta,q}
    \supset  ((L^{\vec p_0,\vec \infty},L^{\vec p_1,\vec \infty})_{\theta_0,1},(L^{\vec p_0,\vec
    \infty},L^{\vec p_1,\vec \infty})_{\theta_1,1} )_{\vartheta,q} 
    =  (L^{\vec p_0,\vec \infty},L^{\vec p_1,\vec \infty})_{(1-\vartheta)\theta_0+\vartheta\theta_1,q}
  \end{align*}
  thanks to the Reiteration Theorem of real interpolation theory~\cite[Theorem~3.11.15]{BerghLoefstroem1976}.
  In particular, the monotonicity of Lorentz spaces with respect to the second parameter gives  
  \begin{align}  
    (L^{\vec p_{\theta_0}}, L^{\vec p_{\theta_1}})_{\vartheta,q}
    \supset  (L^{\vec p_0,\vec r_0},L^{\vec p_1,\vec r_1})_{(1-\vartheta)\theta_0+\vartheta\theta_1,q}
    \qquad\text{for } \vec r_0,\vec r_1\in [1,\infty]^2. 
    \label{eq:Zq0infinity2} 
  \end{align}
  
  \medskip
  
  The opposite inclusion is proved with a duality argument. We get for $q\in [1,\infty),\vartheta\in (0,1)$
  \begin{align*}
    \|f\|_{(L^{\vec p_0,\vec 1},L^{\vec p_1,\vec 1})_{(1-\vartheta)\theta_0+\vartheta\theta_1,q}}
    &\simeq  \sup \left\{ T(f) :
    \|T\|_{((L^{\vec p_0,\vec 1},L^{\vec p_1,\vec 1})_{(1-\vartheta)\theta_0+\vartheta\theta_1,q})'}
    \leq  1\right\} \\
    &\simeq \sup \left\{ T(f) :
    \|T\|_{((L^{\vec p_0,\vec 1})',(L^{\vec p_1,\vec 1})')_{(1-\vartheta)\theta_0+\vartheta\theta_1,q'}}
    \leq  1\right\} \\
    &\simeq  \sup \left\{ \int_{X\times Y} fg\,d\mu :
    \|g\|_{(L^{\vecp p_0,\vec \infty},L^{\vecp p_1,\vec \infty})_{(1-\vartheta)\theta_0+\vartheta\theta_1,q'}}
    \leq 1\right\} \\
    &\stackrel{\eqref{eq:Zq0infinity2}}\les
    \sup \left\{ \int_{X\times Y} fg\,d\mu :
    \|g\|_{ (L^{\vecp p_{\theta_0}}, L^{\vecp p_{\theta_1}})_{\vartheta,q'}}
     \leq  1\right\}  \\
    &\simeq \sup \left\{ T(f) :
    \|T\|_{ ((L^{\vec p_{\theta_0}}, L^{\vec p_{\theta_1}})_{\vartheta,q})'}
     \leq  1\right\}  \\
    &\simeq \|f\|_{(L^{\vec p_{\theta_0}}, L^{\vec p_{\theta_1}})_{\vartheta,q}}.
  \end{align*}
  In the second and fifth line we used the formula for the dual of a real interpolation space  
  \cite[Theorem~3.7.1]{BerghLoefstroem1976} that is valid for $1\leq q<\infty$. 
  Proposition~\ref{prop:dual} was used in third and fifth line. The restriction $q<\infty$ can be removed by
  reiterating this family of embeddings with respect to $\vartheta$ just as above. This implies for all $\vec
  r_0,\vec r_1\in [1,\infty]^2$
    \begin{align} \label{eq:Zq01}
      (L^{\vec p_{\theta_0}}, L^{\vec p_{\theta_1}})_{\vartheta,q}
      \subset (L^{\vec p_0,\vec 1}, L^{\vec p_1,\vec 1})_{(1-\vartheta)\theta_0+\vartheta\theta_1,q}
      \subset (L^{\vec p_0,\vec r_0}, L^{\vec p_1,\vec r_1})_{(1-\vartheta)\theta_0+\vartheta\theta_1,q}. 
    \end{align}
   So \eqref{eq:Zq0infinity2},\eqref{eq:Zq01} give 
  \begin{equation} \label{eq:MixedInterpolationEquality}
     (L^{\vec p_{\theta_0}}, L^{\vec p_{\theta_1}})_{\vartheta,q}
     = (L^{\vec p_0,\vec r_0},L^{\vec p_1,\vec r_1})_{(1-\vartheta)\theta_0+\vartheta\theta_1,q}
     \quad\text{for }\vec r_0,\vec r_1\in [1,\infty]^2,
  \end{equation} 
  which finishes the proof of the theorem.
  
  \medskip
  
  To prove Corollary~\ref{cor:MixedInterpolation} we  additionally assume $\vec p_0=(p_0,p_0),\vec
  p_1=(p_1,p_1)$ where $1<p_0\neq p_1<\infty$.  Then the left hand side
  in~\eqref{eq:MixedInterpolationEquality} equals 
  $(L^{p_{\theta_0}},L^{p_{\theta_1}})_{\vartheta,q}= L^{p_{\theta},q}$ where $\theta:=
  (1-\vartheta)\theta_0+\vartheta\theta_1$.
  Since $\theta$ may take all values in $(0,1)$ for appropriate choices of $\theta_0,\theta_1,\vartheta$, we
  get 
  $$
    L^{p_\theta,q} = (L^{\vec p_0,\vec r_0},L^{\vec p_1,\vec r_1})_{\theta,q} \qquad\text{for all
    }\theta\in (0,1),\; q\in [1,\infty]
  $$
  which is all we had to show.

  \begin{rem}
     Both results require $p_0^i\neq p_1^i$ for $i=1,2$, which is needed
     for~\eqref{eq:ChenSunInequality}. In the opposite case, similar embeddings may be based on suitable
     generalizations of the interpolation inequality
     $$
       \|u\|_{p,r_\theta} \les \|u\|_{p,r_0}^{1-\theta}\|u\|_{p,r_1}^{\theta}
     $$
     for $1\leq p<\infty$, see the last line in~\cite[Theorem~5.3.1]{BerghLoefstroem1976}.    
  \end{rem}

 \section{Minkowski's inequality in integral form} \label{sec:Minkowski}

 We first state Minkowski's inequality in integral form for Lorentz spaces $L^{p,r}(Y)$. For notational
 convenience we occasionally write $L^{p,r}$ in this section. 
 This is actually a known result, see~\cite[Theorem~6.3]{BarzaKolyadaSoria2009} for the case $1<p<r\leq
 \infty$ and to \cite[p.121]{KolyadaSoria2016} for the case $1\leq r\leq p<\infty$. We present a very short self-contained
 proof that, in contrast to \cite{BarzaKolyadaSoria2009,KolyadaSoria2016}, does however not reveal precise
 information about the best constants.  

\begin{lem}\label{lem:MinkowskiI}
  If $1<p<\infty,1\leq r\leq \infty$ or $p=r\in\{1,\infty\}$ then
  $$
    \left\|\int_X f(x,\cdot)\,d\mu_X(x)\right\|_{L^{p,r}(Y)}
    \les \int_X \|f(x,\cdot)\|_{L^{p,r}(Y)} \,d\mu_X(x).
  $$
\end{lem}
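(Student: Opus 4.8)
The plan is to reduce the statement to the known scalar Minkowski inequality on $L^p(Y)$ (i.e.\ the case $r=p$) by exploiting the duality $ (L^{p,r}(Y))'=L^{p',r'}(Y)$, which holds under exactly the stated normability hypotheses. First I would dispose of the trivial endpoints $p=r=1$ and $p=r=\infty$: the former is the classical triangle inequality for $L^1(Y)$ and the latter is the elementary bound $\|\sup\|\le\sup$, or rather its $L^\infty$-analogue, so no interpolation machinery is needed there. For the main range $1<p<\infty$, $1\le r\le\infty$ I would argue by testing against the dual space. Concretely, set $G(y):=\int_X f(x,y)\,d\mu_X(x)$ and write
\begin{equation*}
  \|G\|_{L^{p,r}(Y)} \simeq \sup\Bigl\{ \int_Y |G(y)|\,|h(y)|\,d\mu_Y(y) \;:\; \|h\|_{L^{p',r'}(Y)}\le 1 \Bigr\},
\end{equation*}
using that $\|\cdot\|_{p,r}$ is equivalent to a norm here and that $L^{p',r'}(Y)$ is its dual (absolute continuity of the norm, as in Proposition~\ref{prop:dual}).

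Then for a fixed admissible $h$ I would estimate, using $|G(y)|\le\int_X|f(x,y)|\,d\mu_X(x)$, Tonelli's theorem to swap the two integrals, and finally H\"older's inequality in the Lorentz-space pairing on $Y$:
\begin{align*}
  \int_Y |G(y)|\,|h(y)|\,d\mu_Y(y)
  &\le \int_Y \Bigl(\int_X |f(x,y)|\,d\mu_X(x)\Bigr) |h(y)|\,d\mu_Y(y) \\
  &= \int_X \Bigl( \int_Y |f(x,y)|\,|h(y)|\,d\mu_Y(y) \Bigr) d\mu_X(x) \\
  &\les \int_X \|f(x,\cdot)\|_{L^{p,r}(Y)}\,\|h\|_{L^{p',r'}(Y)}\,d\mu_X(x)
  \les \int_X \|f(x,\cdot)\|_{L^{p,r}(Y)}\,d\mu_X(x).
\end{align*}
Taking the supremum over $h$ yields the claim. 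The H\"older inequality for Lorentz spaces, $\int_Y|uv|\les\|u\|_{p,r}\|v\|_{p',r'}$, is standard in this exponent range (e.g.\ O'Neil), so this step is legitimate; one must only be careful that it is valid for $r=\infty$ as well, where $r'=1$.

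The main obstacle is a measurability/reduction subtlety rather than a hard inequality: one needs $y\mapsto\int_X|f(x,y)|\,d\mu_X(x)$ to be a well-defined measurable function and the Tonelli interchange on $X\times Y$ to be justified, which requires $f$ to be jointly measurable and $\sigma$-finiteness of both factors — exactly the running assumptions. A secondary point is that the duality characterization of $\|G\|_{p,r}$ I invoke is an \emph{equivalence} of norms, not an equality, which is why the conclusion comes with an implicit constant and no information on sharp constants — consistent with the remark preceding the lemma. If one prefers to avoid duality for $r=\infty$, that case can alternatively be handled directly from the layer-cake formula $\|G\|_{p,\infty}=\sup_{\rho>0}\rho\,\mu_Y(\{|G|\ge\rho\})^{1/p}$ together with the weak-type version of Minkowski's inequality, but the duality route treats all $r\in[1,\infty]$ uniformly and is shorter.
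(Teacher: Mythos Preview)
Your argument is correct, but it proceeds by a genuinely different route than the paper. The paper proves the lemma by real interpolation: it observes that the operator $Tf(y)=\int_X f(x,y)\,d\mu_X(x)$ is bounded $L^1(X;L^1)\to L^1$ and $L^1(X;L^\infty)\to L^\infty$, and then interpolates, using the Cwikel embedding \eqref{eq:CwikelInclusionII} with $p=1$ to pass from $(L^1(X;L^1),L^1(X;L^\infty))_{\theta,r}$ to $L^1(X;(L^1,L^\infty)_{\theta,r})=L^1(X;L^{p,r})$. Your proof instead tests against the associate space $L^{p',r'}(Y)$, swaps the order of integration by Tonelli, and applies O'Neil's H\"older inequality for Lorentz spaces. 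Both are short; yours avoids any interpolation machinery and is arguably more elementary, while the paper's version stays entirely within the interpolation framework used throughout and needs no external inputs beyond the classical Lebesgue endpoints.

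Two small remarks. First, your appeal to Proposition~\ref{prop:dual} and ``absolute continuity of the norm'' is not quite the right citation: that proposition concerns mixed Lorentz spaces under the extra hypothesis that the measure spaces are non-atomic or discrete, and in any case for $r=\infty$ the norm on $L^{p,\infty}$ is not absolutely continuous, so $L^{p',1}$ is not its Banach dual. What you actually need (and what suffices) is that $L^{p',r'}(Y)$ is the \emph{associate} (K\"othe) space of $L^{p,r}(Y)$ for $1<p<\infty$, $1\le r\le\infty$, which gives the norming identity $\|G\|_{p,r}\simeq\sup\{\int_Y|Gh|:\|h\|_{p',r'}\le1\}$ without any separability or absolute-continuity assumptions. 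Your testing-against-characteristic-functions remark for $r=\infty$ is exactly how one verifies this by hand. Second, your opening sentence announces a reduction to the scalar Minkowski inequality on $L^p(Y)$, but the body of the proof never uses that; you might want to align the summary with what you actually do.
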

\begin{proof}
  Set $Tf(y) := \int_X f(x,y)\,d\mu_X(x)$. Fubini's Theorem gives $\|Tf\|_{L^1(X)}\leq
  \|f\|_{L^1(X;L^1)}$. Moreover, we have $\|Tf\|_{L^\infty(X)}\leq \|f\|_{L^1(X;L^\infty)}$.
  Real interpolation gives for $\frac{1}{p}= \frac{1-\theta}{1}+\frac{\theta}{\infty},\theta\in (0,1)$ and
  $r\in [1,\infty]$ 
  $$
    \|Tf\|_{L^{p,r}(X)}
    \les \|f\|_{(L^1(X;L^1),L^1(X;L^\infty))_{\theta,r}}
    \les \|f\|_{L^1(X;(L^1,L^\infty)_{\theta,r})}
    = \int_X \|f(x,\cdot)\|_{p,r}\,d\mu_X(x).
  $$
  In the second estimate we used the Minkowski-type inequality for Lebesgue spaces corresponding to
  the embedding~\eqref{eq:CwikelInclusionII} for $p=1$.
\end{proof}

  We will need the following property of Peetre's $K$-functional for
  $s\geq 0$ and exponents $0<p<\infty,0<r\leq \infty$ or $p=r=\infty$:
  \begin{align} \label{eq:KfunctionalRule}
  \begin{aligned}
   &\; \|x\mapsto K(s,f(x,\cdot),A_0,A_1) \|_{L^{p,r}(X)} \\
  &=  \| x\mapsto  \inf_{g_0+g_1=f(x,\cdot)} \|g_0\|_{A_0}+s\|g_1\|_{A_1} \|_{L^{p,r}(X)}  \\
  &=   \inf_{f_0+f_1=f}  \| x\mapsto  \|f_0(x,\cdot)\|_{A_0} + s \|f_1(x,\cdot)\|_{A_1} \|_{L^{p,r}(X)}  \\
  &\simeq   \inf_{f_0+f_1=f}  \| x\mapsto  \|f_0(x,\cdot)\|_{A_0} \|_{L^{p,r}(X)} + s \|
   x\mapsto  \|f_1(x,\cdot)\|_{A_1} \|_{L^{p,r}(X)} \\
  &=   \inf_{f_0+f_1=f}  \|f_0\|_{L^{p,r}(X;A_0)}  + s \| f_1\|_{L^{p,r}(X;A_1)} \\
  &= K(s,f,L^{p,r}(X;A_0),L^{p,r}(X;A_1)).
\end{aligned}
\end{align}
In the third line the bound $\les$ only exploits  the quasi-norm property of $\|\cdot\|_{L^{p,r}(X)}$ and
$\gtrsim$ uses that $0\leq h_1\leq h_2$ on $X$ implies $\|h_1\|_{L^{p,r}(X)}\les \|h_2\|_{L^{p,r}(X)}$. The
latter fact   also explains the passage from the first to the second line. In a more general context,
\eqref{eq:KfunctionalRule} was proved in~\cite[Proposition~4.4]{ChenSun2022}.

\begin{lem}\label{lem:MinkowskiII}
  If $0<p<1,0<r\leq 1$ or $p=r=1$ then
  $$
    \int_X \|f(x,\cdot)\|_{L^{p,r}} \,d\mu_X(x)
    \les \left\|\int_X |f(x,\cdot)|\,d\mu_X(x)\right\|_{L^{p,r}}.
  $$
\end{lem}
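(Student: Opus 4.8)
The plan is to mimic the proof of Lemma~\ref{lem:MinkowskiI}, but now interpolating \emph{reverse} Minkowski-type inequalities for Lebesgue spaces and working below the normability threshold, where the natural tool is the $K$-functional identity~\eqref{eq:KfunctionalRule} rather than duality. First I would treat the exceptional case $p=r=1$ separately: there $L^{1,1}=L^1$, and the claimed inequality is just the ordinary Minkowski integral inequality in $L^1$ (in fact an equality up to the triangle inequality for the integrand), so nothing is to be done. Thus I may assume $0<p<1$ and $0<r\le 1$.

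For the main case, set $Tf(y):=\int_X f(x,y)\,d\mu_X(x)$ and $g(y):=\int_X|f(x,y)|\,d\mu_X(x)$, so $|Tf|\le g$ pointwise. I would like to view the operator $S:f\mapsto \big(x\mapsto\|f(x,\cdot)\|_{p,r}\big)$ and realize the desired bound $\|Sf\|_{L^1(X)}\le C\,\|g\|_{p,r}$ as a consequence of the two endpoint reverse inequalities corresponding to~\eqref{eq:CwikelInclusionI} with $q=1$: namely $\mathcal L^1(X;A_0)\supset$ or $\subset$ the relevant interpolation space. Concretely, I would exploit~\eqref{eq:KfunctionalRule} with exponents $(p,r)$ (admissible since $0<p<\infty$, $0<r\le\infty$) together with the definition~\eqref{eq:DefNormInterpolationspace} of the interpolation norm to write, for a compatible couple $(A_0,A_1)$,
$$
  \|f\|_{(L^{p,r}(X;A_0),L^{p,r}(X;A_1))_{\theta,q}}
  \simeq \Big\|\,x\mapsto \|f(x,\cdot)\|_{(A_0,A_1)_{\theta,q}}\,\Big\|_{L^{p,r}(X)}.
$$
Then I would choose $A_0=L^1,A_1=L^\infty$ over $Y$ with the parameter $\theta$ fixed by $\frac1p=1-\theta$ (valid since $0<p<1$) and $q=r$ (using $0<r\le 1$ so that $(L^1,L^\infty)_{\theta,r}$ is a genuine Lorentz space with $r\le 1$, which is exactly the regime where the forward embedding in~\eqref{eq:CwikelInclusionI}-type reasoning has something nontrivial to say). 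The key analytic input is the reverse Minkowski inequality for $L^1$-valued integrals, i.e.\ the case $p=1$ of~\eqref{eq:CwikelInclusionI}, namely $\mathcal L^1(X;(L^1,L^\infty)_{\theta,r})\subset (\mathcal L^1(X;L^1),\mathcal L^1(X;L^\infty))_{\theta,r}$, and the elementary endpoint bounds $\|Tf\|_{L^1(X;L^1)}\le\|f\|_{L^1(X;L^1)}$ (Fubini) and $\|Tf\|_{L^1(X;L^\infty)}\le\|f\|_{L^1(X;L^\infty)}$.

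Assembling these, the chain would read: starting from the left side, $\int_X\|f(x,\cdot)\|_{p,r}\,d\mu_X(x)=\|f\|_{L^1(X;L^{p,r})}$; since $p<1$ and $r\le 1$ we recognize $L^{p,r}(Y)=(L^1(Y),L^\infty(Y))_{\theta,r}$ with $1-\theta=1/p$; applying the $p=1$ reverse-Minkowski embedding gives $\le C\,\|f\|_{(L^1(X;L^1),L^1(X;L^\infty))_{\theta,r}}$; by~\eqref{eq:KfunctionalRule} (or directly the boundedness of $T$ on both endpoints plus the interpolation functor) this is $\lesssim\|Tf\|_{(L^1,L^\infty)_{\theta,r}}$ evaluated after passing to $|f|$, which by monotonicity is $\le\|g\|_{(L^1(Y),L^\infty(Y))_{\theta,r}}=\|g\|_{p,r}$. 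That is the assertion. I expect the main obstacle to be bookkeeping the exponent identifications—verifying that every Lorentz/interpolation identification $(L^1,L^\infty)_{\theta,r}=L^{p,r}$ used here really holds in the quasi-Banach range $0<p<1$, $0<r\le1$ (so that the interpolation functor is still well-defined and~\eqref{eq:KfunctionalRule} applies), and checking that the reverse-Minkowski input for $L^1$ is legitimately available in this regime rather than circularly depending on what we are proving. Once the endpoint spaces are pinned down, the rest is a formal interpolation argument parallel to Lemma~\ref{lem:MinkowskiI}.
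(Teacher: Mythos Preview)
Your plan has a genuine gap at the choice of endpoint couple. You write $L^{p,r}(Y)=(L^1(Y),L^\infty(Y))_{\theta,r}$ with $1-\theta=1/p$ and call this ``valid since $0<p<1$'', but in fact $1-\theta=1/p>1$ would force $\theta<0$: real interpolation between $L^1$ and $L^\infty$ only produces $L^{p,r}$ with $p\in(1,\infty)$, never $p<1$. Moreover, even setting that aside, the step where you pass from $\|f\|_{(L^1(X;L^1),L^1(X;L^\infty))_{\theta,r}}$ to $\|T|f|\|_{(L^1,L^\infty)_{\theta,r}}$ requires the reverse Minkowski inequality at \emph{both} endpoints, i.e.\ $\|f\|_{L^1(X;A_i)}\lesssim\big\|\int_X|f(x,\cdot)|\,d\mu_X(x)\big\|_{A_i}$ for $i=0,1$. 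For $A_0=L^1$ this is Fubini, but for $A_1=L^\infty$ it reads $\int_X\sup_y|f(x,y)|\,d\mu_X(x)\lesssim \sup_y\int_X|f(x,y)|\,d\mu_X(x)$, which is false in general. So the $L^\infty$ endpoint cannot be used.

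Both defects have the same cure, and it is exactly what the paper does: replace the $L^\infty$ endpoint by $L^q$ for some $0<q<p<1$. Then $\tfrac1p=\tfrac{1-\theta}{1}+\tfrac{\theta}{q}$ admits a solution $\theta\in(0,1)$, giving $L^{p,r}=(L^1,L^q)_{\theta,r}$, and the classical reverse Minkowski inequality for Lebesgue spaces (valid precisely for exponents in $(0,1]$) supplies the missing endpoint bound at $L^q$. With that correction your outline essentially becomes the paper's argument; the paper implements it by first using the \emph{forward} Minkowski inequality in $L^{1/r}(X)$ (legitimate since $1/r\geq 1$) to move the $d\tau$-integral outside, then invoking~\eqref{eq:KfunctionalRule}, and finally applying the reverse Lebesgue--Minkowski bounds in $L^1$ and $L^q$ for each fixed $s$.
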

\begin{proof}
  For $p=r\in (0,1]$ this inequality (between Lebesgue spaces) is well-known, which we will use below.
  So consider $0<p<1,0<r\leq 1$ and choose $0<q<p,\theta\in (0,1)$ with the property
  $\frac{1}{p}=\frac{1-\theta}{1}+\frac{\theta}{q}$, so
  $L^{p,r}=(L^1,L^q)_{\theta,r}$ by \cite[Theorem~5.3.1]{BerghLoefstroem1976}. We shall deduce the claimed
  estimate from the corresponding inequalities for the spaces $L^1$ and $L^q$.
  Set $d\tau(s):= s^{-\theta r-1}\,ds$. Minkowski's inequality in the Lebesgue space
  $L^{1/r}(X)$ (see Lemma~\ref{lem:MinkowskiI}) gives
  \begin{align*}
    \int_X \|f(x,\cdot)\|_{p,r} \,d\mu_X(x)
    &\les \int_X \|f(x,\cdot)\|_{(L^1,L^q)_{\theta,r}} \,d\mu_X(x) \\
    &\stackrel{\eqref{eq:DefNormInterpolationspace}}= \int_X  \left( \int_0^\infty
    K(s,|f(x,\cdot)|,L^1,L^q)^r \,d\tau(s) \right)^{\frac{1}{r}} \,d\mu_X(x)  \\
    &= \left\| x\mapsto \int_0^\infty K(s,|f(x,\cdot)|,L^1,L^q)^r \,d\tau(s)
    \right\|_{L^{\frac{1}{r}}(X)}^{\frac{1}{r}} \\
    &\les \left( \int_0^\infty \left\| x\mapsto  K(s,|f(x,\cdot)|,L^1,L^q)^r
    \right\|_{L^{\frac{1}{r}}(X)}  \,d\tau(s) \right)^{\frac{1}{r}} \\
    &= \left( \int_0^\infty \left( \int_X K(s,|f(x,\cdot)|,L^1,L^q) \,d\mu_X(x) \right)^r
     \,d\tau(s) \right)^{\frac{1}{r}} \\
   &\stackrel{\eqref{eq:KfunctionalRule}}\simeq
   \left(    \int_0^\infty K(s,|f|,L^1(X;L^1),L^1(X;L^q))^r \,d\tau(s)
    \right)^{\frac{1}{r}} \\
    &=
   \left(    \int_0^\infty \left( \inf_{f_0+f_1=|f|, f_0,f_1\geq 0} \|f_0\|_{L^1(X;L^1)} +
   s\|f_1\|_{L^1(X;L^q)}\right)^r \,d\tau(s)
    \right)^{\frac{1}{r}}.
  \intertext{
  Exploiting Minkowski's reverse inequality in the Lebesgue spaces $L^1$ and $L^q$
  we deduce}
  \int_X \|f(x,\cdot)\|_{p,r} \,d\mu_X(x)
    &\les
   \Bigg(    \int_0^\infty \Bigg( \inf_{f_0+f_1=|f|, f_0,f_1\geq 0 } \left\| \int_X f_0(x,\cdot)\,d\mu_X(x)
   \right\|_{L^1} \\
   &\qquad\qquad + s \left\| \int_X  f_1(x,\cdot)\,d\mu_X(x) \right\|_{L^q}\Bigg)^r \,d\tau(s)
    \Bigg)^{\frac{1}{r}} \\
    &= \left(    \int_0^\infty \left( \inf_{g_0+g_1=\int_X |f(x,\cdot)|\,d\mu_X(x), g_0,g_1\geq 0 }
   \|g_0\|_{L^1} + s\|g_1\|_{L^q}\right)^r \,d\tau(s) \right)^{\frac{1}{r}} \\
    &= \left(    \int_0^\infty K\left(s,\int_X |f(x,\cdot)|\,d\mu_X(x),L^1,L^q\right)^r \,d\tau(s)
    \right)^{\frac{1}{r}} \\
    &=    \left\|\int_X |f(x,\cdot)|\,d\mu_X(x)\right\|_{(L^1,L^q)_{\theta,r}}  \\
    &\les  \left\|\int_X |f(x,\cdot)|\,d\mu_X(x)\right\|_{L^{p,r}}. 
  \end{align*}
  In the fourth last line we used that for all nonnegative functions $g_0,g_1$ such that $g_0(y)+g_1(y) =
  \int_X |f(x,y)|\,d\mu_X(x)$ for all $y\in Y$ we can find nonnegative $f_0,f_1$ such that $f_0+f_1=|f|$ and
  $\int_X f_i(x,y)\,d\mu_X(x) = g_i(y)$ for $i=0,1$. For instance choose
  $f_0(x,y):= \theta(y)|f(x,y)|,f_1(x,y):=(1-\theta(y))|f(x,y)|$ where $\theta(y):= g_0(y)g(y)^{-1}
  \ind_{g(y)\neq 0}\in [0,1]$.
\end{proof}

\section{Counterexamples for Minkowski's inequality in integral form} \label{sec:MinkowskiCounter}

 We now show that the Minkowski inequalities from the previous section can only hold under the
 assumptions stated there. We always assume $0<p<\infty,0<r\leq \infty$ or $p=r=\infty$ in the following.

\subsection{\textbf{Necessity of $r\geq 1$ in Lemma~\ref{lem:MinkowskiI} and $r\leq 1$ in
Lemma~\ref{lem:MinkowskiII}}}

We only need to consider the case $p<\infty$. Take the function
$$
  f(x,y) = \ind_{x^{\alpha} y\leq 1},\qquad  x\in X:=[0,1],\; y\in Y:=\R_+,
$$
where both measure spaces are equipped with the one-dimensional Lebesgue measure. The parameter $\alpha$ will
be chosen such that $\alpha\nearrow p$. On the one hand we have
\begin{align*}
  \int_0^1  \|f(x,\cdot)\|_{p,r}\,dx
  = \int_0^1 \|\ind_{x^{\alpha}(\cdot)\leq 1}\|_{p,r}\,dx
   = \int_0^1 x^{-\frac{\alpha}{p}} \,dx \,\|\ind_{[0,1]}\|_{p,r}
   \simeq (p-\alpha)^{-1}.
\end{align*}
 On the other hand,
 \begin{align*}
  \left\|\int_0^1 f(x,\cdot)\,dx \right\|_{p,r}
  &= \left\|\int_0^1 \ind_{x^\alpha (\cdot)\leq 1} \,dx \right\|_{p,r}
  \;=\; \left\| \min\{1,(\cdot)^{-\frac{1}{\alpha}}\}  \right\|_{p,r} \\
  &= \left\| \rho |\{\min\{1,(\cdot)^{-\frac{1}{\alpha}}\}\geq \rho\}|^{\frac{1}{p}}
  \right\|_{L^r(\R_+,\rho^{-1}\,d\rho)} \\
  &= \left\| \rho^{1-\frac{\alpha}{p}} \right\|_{L^r([0,1],\rho^{-1}\,d\rho)} \\
  &\simeq  (p-\alpha)^{-\frac{1}{r}}.
\end{align*}
As a consequence,
$$
  \frac{\left\|\int_0^1 f(x,\cdot)\,dx \right\|_{p,r}}{\int_0^1  \|f(x,\cdot)\|_{p,r}\,dx}
  \simeq (p-\alpha)^{1-\frac{1}{r}}.
$$
This proves the necessity of $r\geq 1$ (Lemma~\ref{lem:MinkowskiI}) respectively $r\leq 1$
(Lemma~\ref{lem:MinkowskiII}).

\medskip

\subsection{\textbf{Necessity of $p\geq 1$ in Lemma~\ref{lem:MinkowskiI}  and $p\leq 1$
in Lemma~\ref{lem:MinkowskiII}}}
Consider
$$
  f(x,y) = \ind_{x\in\{1,\ldots,N\}}\, \phi(y-x),\qquad x\in X:=\N,\; y\in Y:=\R 
$$
equipped with the Lebesgue measure respectively counting measure. We assume $\phi\in L^{p,r}(\R),\phi\geq 0$
and $\supp(\phi)\subset [-\frac{1}{2},\frac{1}{2}]$. Then
$$
  \sum_{x=1}^N \|f(x,\cdot)\|_{p,r}
 = N \|\phi\|_{p,r}.
$$
On the other hand, exploiting $\supp(\phi)\subset [-\frac{1}{2},\frac{1}{2}]$ we find
\begin{align*}
   \left\| \sum_{x=1}^N f(x,\cdot) \right\|_{p,r}
   &= \left\| \rho \mu\left(\left\{\sum_{x=1}^N \phi(\cdot-x)\geq
   \rho\right\}\right)^{\frac{1}{p}}\right\|_{L^r(\R_+,\rho^{-1}\,d\rho)} \\
   &= \|\rho (N\mu(\{\phi\geq  \rho\})^{\frac{1}{p}}\|_{L^r(\R_+,\rho^{-1}\,d\rho)}  \\
   &=  N^{\frac{1}{p}} \|\phi\|_{p,r}
\end{align*}
and the claim follows from $N\to\infty$.

\subsection{\textbf{Minkowski fails in $L^{1,r},1<r\leq \infty$}}

We fix $\beta\in (\frac{1}{r},1)$ and define for $N\in\N$ (that we shall send to infinity) the function 
$$
    f(x,y) = \ind_{|x|\in\{0,\ldots,N\}} \phi(y-x),\qquad
    \phi(z) = (2+|z|)^{-1} \log(2+|z|)^{-\beta} \quad (z\in\R)
$$
where  $x\in X:=\N,y\in Y:=\R$. The spaces $X,Y$ are equipped with the counting measure and the Lebesgue
measure, respectively. A calculation gives
$$
  \sum_{|x|\leq N}  \| f(x,\cdot)\|_{1,r}
   = (2N+1)   \| \phi\|_{1,r}
   \simeq N.
$$
Note that $\beta>\frac{1}{r}$ implies that $\| \phi\|_{1,r}$ is finite.
On the other hand, we obtain  for $I_l=[l-\frac{1}{2},l+\frac{1}{2}]$
\begin{align*}
    \sum_{|x|\leq N}  f(x,y)
    &= \sum_{|x|\leq N}  (2+|y-x|)^{-1} \log(2+|y-x|)^{-\beta} \\
    &\geq \sum_{|l|\leq N}  \ind_{I_l}(y)  \sum_{|x|\leq N}  (2+|y-x|)^{-1} \log(2+|y-x|)^{-\beta} \\
    &\simeq \sum_{|l|\leq N}  \ind_{I_l}(y) \sum_{|x|\leq N}   (2+|l-x|)^{-1} \log(2+|l-x|)^{-\beta} \\
    &\gtrsim \sum_{|l|\leq N}  \ind_{I_l}(y) \sum_{m=2}^N   m^{-1} \log(m)^{-\beta} \\
    &\simeq   \ind_{[-N,N]}(y) \int_2^N t^{-1} \log(t)^{-\beta}\,dt \\
    &\simeq \ind_{[-N,N]}(y) \log(N)^{1-\beta}.
  \end{align*}
This implies
$$
  \left\|\sum_{|x|\leq N}  f(x,\cdot)\right\|_{1,r}
  \gtrsim   \log(N)^{1-\beta}  \| \ind_{[-N,N]} \|_{1,r}
  \gtrsim  N \log(N)^{1-\beta},
$$
which grows faster than $N$ as $N\to\infty$. So Minkowski's inequality from
Lemma~\ref{lem:MinkowskiI} does not hold in $L^{1,r}$ when $r>1$.

\subsection{\textbf{Reverse Minkowski fails in $L^{1,r},0<r<1$}}

Now take $N\in\N$ and $\beta>\frac{1}{r}>1$. We will consider $\beta$ close to
$\frac{1}{r}$ and $N\to\infty$. Then the same function $f$ as above satisfies
$$
  \sum_{|x|\leq N}  \| f(x,\cdot)\|_{1,r}
   = (2N+1)   \| \phi\|_{1,r}
   \simeq N (r\beta -1)^{-\frac{1}{r}}
$$
as well as
  \begin{align*}
    \sum_{|x|\leq N}  f(x,y)
    &\simeq \sum_{l\in\Z}  \ind_{I_l}(y) \sum_{|x|\leq N}   (2+|l-x|)^{-1} \log(2+|l-x|)^{-\beta} \\
    &\les \ind_{[-2N,2N]}(y) \sum_{|x|\leq 3N}   (2+|x|)^{-1} \log(2+|x|)^{-\beta}  \\
    &+ \sum_{|l|\geq 2N}  \ind_{I_l}(y) \sum_{|x|\leq N}   (2+|l|)^{-1} \log(2+|l|)^{-\beta}  \\
    &\simeq
    \ind_{[-2N,2N]}(y) \int_2^{3N+2} t^{-1} \log(t)^{-\beta}\,dt
    + N  \sum_{|l|\geq 2N+2}  \ind_{I_l}(y)  |l|^{-1} \log(|l|)^{-\beta}  \\
    &\simeq \ind_{[-2N,2N]}(y)
    + N  \sum_{|l|\geq 2N+2}  \ind_{I_l}(y) |l|^{-1} \log(|l|)^{-\beta}.
  \end{align*}
  Next we exploit that the inverse of $t\mapsto t^{-1}\log(t)^{-\beta}$ behaves like $\rho\mapsto
  \rho^{-1}|\log(\rho)|^{-\beta}$ as $t\to\infty$ or $r\to 0^+$, respectively. 
  Hence, we get a positive constant $C$ such that we have for large $N\in\N$
  \begin{align*}
    \left\| \sum_{|x|\leq N}  f(x,\cdot) \right\|_{1,r}
    &\les N + N
    \left\|\sum_{|l|\geq 2N+2}  \ind_{I_l}(y) |l|^{-1} \log(|l|)^{-\beta}\right\|_{1,r} \\
    &= N +    N
     \left(\int_0^\infty \rho^{r-1} \left( \sum_{|l|\geq 2N+2}  \ind_{|l|^{-1}
    \log(|l|)^{-\beta} \geq \rho} \right)^r\,d\rho\right)^{\frac{1}{r}} \\
    &\les N +    N
     \left(\int_0^{CN^{-1}\log(N)^{-\beta}} \rho^{r-1} \left( \int_0^{C \rho^{-1}
    \log(\rho)^{-\beta}} 1\,dl \right)^r\,d\rho\right)^{\frac{1}{r}} \\
    &\les
    N +    N
     \left(\int_0^{CN^{-1}} \rho^{r-1} \left(\rho^{-1}|\log(\rho)|^{-\beta}
     \right)^{r}\,d\rho\right)^{\frac{1}{r}}    \\
    &\simeq N + N \left(\int_{\log(NC^{-1})}^\infty t^{-r\beta}  \,dt\right)^{\frac{1}{r}} \\
    &\simeq N +   N\log(N)^{\frac{1}{r}-\beta} (r\beta-1)^{-\frac{1}{r}}.
  \end{align*}
 This implies
  \begin{align*}
    \frac{ \left\| \sum_{|x|\leq N}  f(x,\cdot) \right\|_{1,r}}{
    \sum_{|x|\leq N}  \|   f(x,\cdot)\|_{1,r} }
    \simeq \frac{N +   N\log(N)^{\frac{1}{r}-\beta} (r\beta-1)^{-\frac{1}{r}}}{N (r\beta -1)^{-\frac{1}{r}}}
    \simeq  (r\beta -1)^{\frac{1}{r}} +   \log(N)^{\frac{1}{r}-\beta}.
  \end{align*}
   We first choose $\beta$ sufficiently close to $\frac{1}{r}$ and then send $N$ to infinity. In this
   way, the quotient can be made arbitrarily small.  So the reverse Minkowski inequality from Lemma~\ref{lem:MinkowskiII} does not hold
   in $L^{1,r}$ with $r\in (0,1)$.

\section{Proof of Theorem~\ref{thm:MixedInterpolationII}}

We first prove (i). So let us assume $0<q<p<\infty, q\leq r\leq \infty$ or $0<p=q=r\leq \infty$
and we want have to show
  $$
    (L^{p,r}(X;A_0),L^{p,r}(X;A_1))_{\theta,q}
    \subset L^{p,r}(X;(A_0,A_1)_{\theta,q}).
  $$
We first assume $q<\infty$, set $d\tau(s)=s^{-\theta q-1}\,ds$. Minkowski's inequality
(Lemma~\ref{lem:MinkowskiI}) yields
\begin{align*}
  \|f\|_{L^{p,r}(X;(A_0,A_1)_{\theta,q})}^q
  &= \| \|f(x,\cdot)\|_{(A_0,A_1)_{\theta,q}}  \|_{L^{p,r}(X)}^q \\
  &= \left\| \left( \int_0^\infty K(s,f(x,\cdot),A_0,A_1)^q \,d\tau(s) \right)^{\frac{1}{q}}
  \right\|_{L^{p,r}(X)}^q  \\
  &= \left\|  \int_0^\infty K(s,f(x,\cdot),A_0,A_1)^q \,d\tau(s)  \right\|_{L^{\frac{p}{q},\frac{r}{q}}(X)}
  \\
  &\les  \int_0^\infty\|  K(s,f(x,\cdot),A_0,A_1)^q \|_{L^{\frac{p}{q},\frac{r}{q}}(X)} \,d\tau(s)  \\
  &=  \int_0^\infty \|K(s,f(x,\cdot),A_0,A_1)  \|_{L^{p,r}(X)}^q  \,d\tau(s) \\
  &\stackrel{\eqref{eq:KfunctionalRule}}= \int_0^\infty
  K(s,f,L^{p,r}(X;A_0),L^{p,r}(X;A_1))^q \,d\tau(s) \\
  &= \|f\|_{(L^{p,r}(X;A_0),L^{p,r}(X;A_1))_{\theta,q}}^q.
\end{align*}
In the case $p=q=r=\infty$ the claim holds due to  
\begin{align*}
  \left\| \sup_{s>0} s^{-1} K(s,f(x,\cdot),A_0,A_1)\right\|_{L^{\infty}(X)}
  &= \sup_{s>0,x\in X} s^{-1} K(s,f(x,\cdot),A_0,A_1) \\
  &= \sup_{s>0} s^{-1}  \left\| K(s,f(x,\cdot),A_0,A_1)\right\|_{L^{\infty}(X)}.
\end{align*}
 
Now assume $0<p< q\leq \infty,0<r\leq q$ or $0<p=q=r\leq \infty$. Claim (ii) amounts to proving the
opposite inclusion
$$
    (L^{p,r}(X;A_0),L^{p,r}(X;A_1))_{\theta,q}
    \supset L^{p,r}(X;(A_0,A_1)_{\theta,q}).
  $$
In the case $q<\infty$ this is achieved just as above by replacing the classical Minkowski inequality
by the reverse one from Lemma~\ref{lem:MinkowskiII}, which requires $0<\frac{p}{q}<1,0<\frac{r}{q}\leq 1$ or
$0<r=p=q<\infty$. In the case $q=\infty$ one argues similarly by exploiting
$$
  \left\| \sup_{s>0} s^{-1} K(s,f(x,\cdot),A_0,A_1)\right\|_{L^{p,r}(X)}
  \geq \sup_{s>0} s^{-1}  \left\| K(s,f(x,\cdot),A_0,A_1)\right\|_{L^{p,r}(X)}.
$$

\section{An embedding}
    
  The computation of interpolation spaces for mixed Lorentz spaces simplifies a lot once embeddings of the
  latter into ordinary Lorentz spaces are known. The following Lemma, which 
  was proved in~\cite[Theorem~2.4]{ChenSun2020} in the special case
  $r=\infty$, provides one such result.  

   \begin{lem} \label{lem:inclusions}
     Let $(X,\mu_X),(Y,\mu_Y)$ be $\sigma$-finite measure spaces. Then:
     \begin{itemize}
       \item[(i)] $L^p(X;L^{p,r}(Y)) \subset L^{p,r}(X\times Y)$ if $0<p\leq r\leq \infty$,
       \item[(ii)] $L^p(X;L^{p,r}(Y)) \supset L^{p,r}(X\times Y)$ if $0<r\leq p\leq \infty$.
     \end{itemize}
   \end{lem}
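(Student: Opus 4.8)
The plan is to reduce both inclusions to the (forward and reverse) Minkowski integral inequality for \emph{Lebesgue} spaces, after rewriting everything in terms of distribution functions on the auxiliary measure space $(\R_+,\rho^{-1}\,d\rho)$. We may assume $p<\infty$ throughout; if $p=\infty$ then $r=\infty$ as well and both inclusions reduce to the Fubini--Tonelli identity $L^\infty(X;L^\infty(Y))=L^\infty(X\times Y)$.

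First I would record the elementary identity, valid for $0<p<\infty$, $0<r\le\infty$ and any measurable $g$ on a $\sigma$-finite measure space $(Z,\mu_Z)$,
\begin{equation*}
  \|g\|_{L^{p,r}(Z)}^{\,p}
  = \big\| \rho\mapsto \rho^{\,p}\,\mu_Z(\{|g|\ge\rho\}) \big\|_{L^{r/p}(\R_+,\,\rho^{-1}\,d\rho)},
\end{equation*}
which follows at once from the definition of $\|\cdot\|_{p,r}$ together with $\|h\|_{L^r(\nu)}^{\,p}=\|h^p\|_{L^{r/p}(\nu)}$ (reading $L^{r/p}=L^\infty$ when $r=\infty$). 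Next I would apply this identity once with $Z=X\times Y$ and, for each fixed $x$, once with $Z=Y$, and invoke Tonelli's theorem to write $\mu(\{|f|\ge\rho\})=\int_X\mu_Y(\{|f(x,\cdot)|\ge\rho\})\,d\mu_X(x)$; Tonelli also guarantees that $(x,\rho)\mapsto\rho^{\,p}\mu_Y(\{|f(x,\cdot)|\ge\rho\})=:\Phi(x,\rho)\ge 0$ is jointly measurable and that $x\mapsto\|f(x,\cdot)\|_{L^{p,r}(Y)}$ is measurable, so that $\|f\|_{L^p(X;L^{p,r}(Y))}$ is well defined. After these reductions $\|f\|_{L^{p,r}(X\times Y)}^{\,p}$ equals the first of the two quantities
\begin{equation*}
  \Big\| \int_X \Phi(x,\cdot)\,d\mu_X(x) \Big\|_{L^{r/p}(\R_+,\,\rho^{-1}\,d\rho)}
  \qquad\text{and}\qquad
  \int_X \big\| \Phi(x,\cdot) \big\|_{L^{r/p}(\R_+,\,\rho^{-1}\,d\rho)} \,d\mu_X(x),
\end{equation*}
while $\|f\|_{L^p(X;L^{p,r}(Y))}^{\,p}$ equals the second, so that (i) and (ii) are precisely the assertions that the first quantity is $\le$, resp.\ $\ge$, the second.

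For part (i) the hypothesis $p\le r$ means $r/p\ge 1$ (with $r/p=\infty$ permitted), so Minkowski's integral inequality in $L^{r/p}(\R_+,\rho^{-1}\,d\rho)$ bounds the first displayed quantity by the second; hence $L^p(X;L^{p,r}(Y))\subset L^{p,r}(X\times Y)$, in fact with constant $1$. For part (ii) the hypothesis $r\le p$ means $0<r/p\le 1$, and the reverse Minkowski integral inequality for Lebesgue spaces with exponent in $(0,1]$ --- the fact already used in the proof of Lemma~\ref{lem:MinkowskiII} --- reverses the estimate and gives $L^{p,r}(X\times Y)\subset L^p(X;L^{p,r}(Y))$.

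I do not expect a genuine obstacle: the only points needing a little care are the bookkeeping with distribution functions and Tonelli's theorem, and the degenerate exponent values $r=\infty$ (where $L^{r/p}=L^\infty$ and the relevant Minkowski inequality is trivial) and $r=p$ (where $r/p=1$ and both inclusions collapse to the identity $L^p(X;L^p(Y))=L^p(X\times Y)$). One could instead deduce (i) and (ii) from Theorem~\ref{thm:MixedInterpolationII} by writing $L^{p,r}(Y)=(L^{p_0}(Y),L^{p_1}(Y))_{\theta,r}$ for suitable $p_0\neq p_1$, but the direct argument above is shorter and completely self-contained.
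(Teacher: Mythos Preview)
Your proof is correct and follows essentially the same route as the paper: rewrite both norms via the distribution function, use Tonelli to express $\mu(\{|f|\ge\rho\})=\int_X\mu_Y(\{|f(x,\cdot)|\ge\rho\})\,d\mu_X$, and then apply the forward (for $r/p\ge 1$) respectively reverse (for $r/p\le 1$) Minkowski integral inequality in the Lebesgue space $L^{r/p}$ over $\R_+$. The only cosmetic difference is that the paper raises everything to the $r$-th power and works with the weight $\rho^{r-1}\,d\rho$, whereas you raise to the $p$-th power and keep the factor $\rho^{\,p}$ inside the $L^{r/p}(\R_+,\rho^{-1}\,d\rho)$-norm; your observation that only the \emph{Lebesgue} Minkowski inequalities are needed (rather than the Lorentz-space versions of Lemmas~\ref{lem:MinkowskiI} and~\ref{lem:MinkowskiII}) is accurate and makes the argument slightly more self-contained.
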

   \begin{proof}
     Let $f:X\times Y\to\R$ be measurable. Then $\mu=\mu_X\otimes \mu_Y$, Cavalieri's principle and
     Minkowski's inequality (Lemma~\ref{lem:MinkowskiI}) imply for $0<p\leq r< \infty$
     \begin{align*}
       \|f\|_{L^{p,r}(X\times Y)}^r
       &= \|\rho\, \mu(\{|f|\geq \rho\})^{\frac{1}{p}} \|_{L^r(\R_+,\rho^{-1}\,d\rho)}^r \\
       &= \left\|\rho \left( \int_X \mu_Y(\{|f(x,\cdot)|\geq \rho\})\,d\mu_X(x)\right)^{\frac{1}{p}}
       \right\|_{L^r(\R_+,\rho^{-1}\,d\rho)}^r    \\
       &= \left\|  \int_X \mu_Y(\{|f(x,\cdot)|\geq \rho\})\,d\mu_X(x)
       \right\|_{L^{\frac{r}{p}}(\R_+,\rho^{r-1}\,d\rho)}^{\frac{r}{p}}    \\
       &\les  \left( \int_X \left\| \mu_Y(\{|f(x,\cdot)|\geq \rho\})
       \right\|_{L^{\frac{r}{p}}(\R_+,\rho^{r-1}\,d\rho)} \,d\mu_X(x)  \right)^{\frac{r}{p}}   \\
       &=  \left( \int_X \left\| \rho\, \mu_Y(\{|f(x,\cdot)|\geq \rho\})^{\frac{1}{p}}
       \right\|_{L^r(\R_+,\rho^{-1}\,d\rho)} \,d\mu_X(x)  \right)^{\frac{r}{p}}   \\
       &=  \left( \int_X \|f(x,\cdot)\|_{L^{p,r}(Y)}^p  \,d\mu_X(x)  \right)^{\frac{r}{p}}   \\
       &=  \|f\|_{L^p(X;L^{p,r}(Y))}^r.
     \end{align*}
     This proves (i) for $r<\infty$ and the proof for $r=\infty$ is similar. Using the reverse inequality from
     Lemma~\ref{lem:MinkowskiII} instead of Lemma~\ref{lem:MinkowskiI} we get (ii).
   \end{proof}

\section*{Acknowledgments}

Funded by the Deutsche Forschungsgemeinschaft (DFG, German Research Foundation) -- Project-ID
258734477 -- SFB~1173.
    
\bibliographystyle{abbrv}
\bibliography{biblio}

\end{document}